\documentclass{article}

\usepackage[utf8]{inputenc}
\usepackage{textalpha} 
\usepackage{amsmath,amsthm,amssymb, latexsym}
\usepackage{ebproof} 
\usepackage[hidelinks]{hyperref}
\usepackage{xcolor} 
\usepackage{xspace} 
\usepackage{xargs} 
\usepackage{mathdots} 
\usepackage{titlesec} 

\theoremstyle{plain}
\newtheorem{theorem}{Theorem}
\newtheorem{lemma}[theorem]{Lemma}

\newtheorem{proposition}[theorem]{Proposition}
\theoremstyle{definition}
\newtheorem{definition}[theorem]{Definition}

\newtheorem{remark}[theorem]{Remark}

\titleformat{\section}
{\normalfont\large\bfseries}{\thesection}{1em}{}
\titleformat{\subsection}
{\normalfont\normalsize\bfseries}{\thesubsection}{1em}{}

\usepackage{fixme}
\fxsetup{
	status=draft,
	author=,
	layout=footnote,
	theme=color
}


\newcommand{\calT}{\mathcal{T}}


\newcommand{\sfN}{\mathsf{N}}

\renewcommand{\|}{~|~}


\newcommand{\mybar}[1]{\overline{#1}}


\renewcommand{\land}{\wedge}
\renewcommand{\lor}{\vee}

\newcommand{\Var}{\mathsf{Var}}

\newcommand{\AxLit}{\ensuremath{\mathsf{Ax}}\xspace}

\newcommand{\RuOr}{\ensuremath{\mathsf{R}_{\lor}}\xspace}
\newcommand{\RuAnd}{\ensuremath{\mathsf{R}_{\land}}\xspace}
\newcommand{\RuBox}{\ensuremath{\mathsf{\mathsf{R}_{\Box}}}\xspace}

\newcommand{\RuFp}[1]{\ensuremath{\mathsf{R}_{#1}}\xspace}

\newcommand{\RuNu}{\RuFp{\nu}}

\newcommand{\RuEta}{\RuFp{\eta}}
\newcommand{\RuWeak}{\ensuremath{\mathsf{R}_{\mathsf{w}}}\xspace}
\newcommand{\RuExp}{\ensuremath{\mathsf{exp}}\xspace}
\newcommand{\RuClo}[1][\dx]{\ensuremath{\nu\mathsf{-clo}_{#1}}\xspace}

\newcommand{\RuCut}{\ensuremath{\mathsf{cut}}\xspace}
\newcommand{\RuInd}{\ensuremath{\mathsf{ind}}\xspace}

\newcommand{\dx}{\ensuremath{\mathsf{x}}}
\newcommand{\dy}{\ensuremath{\mathsf{y}}}


\newcommand{\Clo}{\ensuremath{\mathsf{Clo}}\xspace}
\newcommand{\CloCut}{\ensuremath{\mathsf{Clo}+\mathsf{cut}}\xspace}

\newcommand{\NW}{\ensuremath{\mathsf{NW}}\xspace}





\newcommand{\dia}{\Diamond}

\newcommand{\atneg}[1]{\overline{#1}}

\newcommand{\isbnf}{\;::=\;}
\newcommand{\divbnf}{\;\mid\;}

\newcommand{\Prop}{\mathsf{Prop}}



\newcommand{\Clos}{\mathsf{Clos}}

\renewcommand{\phi}{\varphi}

\newcommand{\tracestep}{\to_{C}}

\title{A note on the incompleteness of Afshari \& Leigh's system \Clo}
\author{Johannes Kloibhofer}

\begin{document}
	
\maketitle

\begin{abstract}
	The system \Clo is a cyclic, cut-free proof system for the modal $\mu$-calculus. 
	It was introduced by Afshari \& Leigh as an intermediate system in their intent to show the completeness of Kozen's axiomatisation for the modal $\mu$-calculus. 
	We prove that \Clo is incomplete by giving a valid sequent $\Phi$ that is not provable in \Clo.  
\end{abstract}	

\section{Introduction}	
The proof system \Clo was introduced by Afshari \& Leigh in \cite{Afshari2017} as a cyclic, cut-free proof system for the modal $\mu$-calculus. 
In that paper they intend to prove the completeness of Kozen's original axiomatization \cite{Kozen1983} for the modal $\mu$-calculus in a proof-theoretic way.
This is done by a series of translations starting from Jungteerapanich and Stirling's \cite{Jungteerapanich2010,Stirling2014} proof system with names to \Clo and further to Kozen's system. Apart from its prominent role in this completeness proof \Clo has also attracted interest as a stand-alone proof system: It is cut-free, cyclic and has very simple annotations and discharge condition. As such the completeness of \Clo also played a crucial role in showing completeness of the natural axiomatization for game logic \cite{Enqvist2019}. 

Our contribution is to show that \Clo is in fact incomplete. This breaks the completeness proof for Kozen's system in \cite{Afshari2017} and thus leaves some further questions open: 
\begin{enumerate}
	\item Can the completeness of Kozen's axiomatization be proven without reducing the problem to the aconjunctive fragment as in Walukiewicz's proof \cite{Walukiewicz2000}?
	\item Can the completeness of Kozen's axiomatization be proven proof-theoretically?
	\item Is there a finitary, well-founded, cut-free and complete proof system for the modal $\mu$-calculus? A candidate would be the system $\mathsf{Koz}_{\mathsf{s}}^-$ introduced in \cite{Afshari2017}, for which the completeness is unknown.
	
	\item Is the natural axiomatization for game logic complete?
\end{enumerate}

\section{Preliminaries}
We assume familiarity with the modal $\mu$-calculus. In this section we fix notations for the modal $\mu$-calculus and the versions of the proof systems \NW and \Clo we are using.

\subsection{Modal $\mu$-calculus}
\emph{Formulas} of the modal $\mu$-calculus are generated 
by the grammar
\[
\phi \isbnf 
p \divbnf \atneg p 
\divbnf x
\divbnf \phi\lor\phi \divbnf \phi\land\phi \divbnf
\dia\phi \divbnf \Box\phi 
\divbnf \mu x . \phi \divbnf \nu x . \phi,
\]
where $p$ is taken from a set $\Prop$ of propositional
variables and $x$ from a set $\Var$ of formal variables. \emph{Fixpoint formulas} of the form $\mu x. \phi$ and $\nu x.\phi$ are called $\mu$-formulas and $\nu$-formulas, respectively. We use $\eta$ to denote either $\mu$ or $\nu$. We call a formula \emph{closed}, if all occurrences of formal variables are bound by a fixpoint. 

Let $\Gamma$ be a sequent, i.e., a set of closed $\mu$-calculus formulas. We assume that formulas in $\Gamma$ are \emph{clean}: With every formal variable $x$ we may associate a unique formula of the form $\eta x. \phi$.  Let $\leq$ denote the \emph{subsumption order} on the formal variables occurring in $\Gamma$, i.e. $x \leq y$ if $x$ occurs free in $\eta y. \psi$. For a fixpoint formula $\eta x. \phi$ we write $\phi[\eta x. \phi]$ for its \emph{unfolding}, i.e. the formula obtained from $\phi$ by replacing every occurrence of $x$ with $\eta x. \phi$. For a sequent $\Gamma$ we define $\Diamond \Gamma$ to be the sequent $\{\Diamond \phi \| \phi \in \Gamma\}$.

\subsection{\NW proofs}
The cyclic proof system \NW is based on the tableaux games introduced by Niwiński \& Walukiewicz \cite{Niwinski1996}. The presentation as a proof system can be justified by identifying winning strategies for one of the players in their games with proofs in the system \NW.

\begin{figure}[htb]
	\begin{align*}
		\begin{minipage}{0.28\textwidth}
			\begin{prooftree}
				\infer[left label=\AxLit:]0{ p^{},\bar{p}^{}}
			\end{prooftree}
		\end{minipage}
		\begin{minipage}{0.31\textwidth}
			\begin{prooftree}
				\hypo{ \Gamma, \varphi^{},\psi^{}}
				\infer[left label= \RuOr:]1{ \Gamma, \varphi \lor \psi^{}}
			\end{prooftree}
		\end{minipage}
		\begin{minipage}{0.32\textwidth}
			\begin{prooftree}
				\hypo{ \Gamma, \varphi^{} }
				\hypo{\Gamma, \psi^{}}
				\infer[left label= \RuAnd:]2{ \Gamma, \varphi \land \psi^{}}
			\end{prooftree}
		\end{minipage}\\
	\phantom{XXXX}\\
		\begin{minipage}{0.28\textwidth}
			\begin{prooftree}
				\hypo{ \Gamma}
				\infer[left label=\RuWeak:]1{ \Gamma, \varphi^{}}
			\end{prooftree}
		\end{minipage}
		\begin{minipage}{0.31\textwidth}
			\begin{prooftree}
				\hypo{ \Gamma, \varphi^{}}
				\infer[left label= \RuBox:]1{ \Diamond \Gamma, \Box \varphi^{}}
			\end{prooftree}
		\end{minipage}
		\begin{minipage}{0.32\textwidth}
			\begin{prooftree}
				\hypo{ \Gamma, \varphi[\eta x.\varphi]^{}}
				\infer[left label=\RuEta:]1{ \Gamma, \eta x.\varphi^{}}
			\end{prooftree}
		\end{minipage}
	\end{align*}
	
	\caption{Rules of \NW}
	\label{fig.NW}
\end{figure}

\emph{\NW derivations} are (possibly infinite) trees respecting the rules of Figure \ref{fig.NW}, where all leaves are labelled by \AxLit.

Let $\pi$ be an \NW derivation and $\tau$ be an \emph{infinite branch} of $\pi$. A \emph{trace} on $\tau= (u_i)_{i \in \omega}$ is a sequence $(\phi_i)_{i\in \omega}$ such that $\phi_i$ is a formula at the node $u_i$ and $\phi_{i+1}$ is a descendant of $\phi_i$ for $i \in \omega$, where descendants are defined as usual. 
E.g., if \RuOr is the applied rule, then the descendants of the principal formula $\phi \lor \psi$ are $\phi$ and $\psi$ and the descendant of any side formula $\chi \in \Gamma$ is $\chi$.
On every trace there is a unique fixpoint-formula $\phi$ occurring infinitely often, which is minimal with respect to the subsumption order. We call a trace \emph{$\nu$-trace} if this formula $\phi$ is a $\nu$-formula and \emph{$\mu$-trace} if it is a $\mu$-formula.

\begin{definition}
	Let \NW be the proof system defined by the rules in Figure \ref{fig.NW}. An \emph{\NW proof} $\pi$ is an \NW derivation, where on every infinite branch of $\pi$ there is a $\nu$-trace.
\end{definition}

\begin{theorem}[Niwiński \& Walukiewicz, \cite{Niwinski1996}]
	\NW is sound and complete.
\end{theorem}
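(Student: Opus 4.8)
The plan is to derive both halves from the ordinal approximation of fixpoints, reading soundness as a ``no counter-model'' argument and completeness through the associated tableau game.

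For soundness I would argue contrapositively: if $\Gamma$ is not valid then it has no \NW proof. Fix a pointed model refuting $\Gamma$, so that every $\phi \in \Gamma$ is false at the designated state, and let $\pi$ be an arbitrary \NW derivation of $\Gamma$. I descend through $\pi$ from the root while maintaining the invariant that \emph{every} formula in the current sequent is false at the current state. Each rule preserves this: \RuOr and \RuEta leave the premise's truth values determined, since a disjunction is false exactly when both disjuncts are and a fixpoint agrees with its unfolding; at \RuAnd I pass to a premise containing a false conjunct; and at \RuBox I move to a successor witnessing the falsity of $\Box\varphi$, which simultaneously keeps every side formula $\Diamond\chi$ false. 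Crucially an \AxLit leaf can never be reached, because $p$ and $\bar p$ are never both false, so the descent produces an infinite branch all of whose nodes consist of false formulas. The decisive observation is that a false $\nu$-formula $\nu y.\psi$ drops out of the decreasing chain of upper approximants of its greatest fixpoint at some least ordinal stage, and that unfolding it strictly lowers this stage; hence no trace built from false formulas can have a $\nu$-formula as its minimal infinitely-unfolded fixpoint, on pain of an infinite descending sequence of ordinals. Thus every trace along the branch is a $\mu$-trace, the branch carries no $\nu$-trace, and $\pi$ violates the proof condition.

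For completeness I would pass through the tableau game of Niwiński \& Walukiewicz, a parity game whose positions are sequents over the finite closure $\Clos(\Gamma)$ under subformulas and unfoldings, and whose priorities encode the subsumption order so that the prover's winning condition is exactly the existence of a $\nu$-trace on every play. The first task is the \emph{adequacy} of this game: the prover wins from $\Gamma$ iff $\Gamma$ is valid. The forward direction reuses the signature argument above, while the converse builds a counter-model whose states are the refuter's reachable positions, using a strategy in which every infinite play is a $\mu$-trace to place a falsifying witness at the root. Granted adequacy, I invoke positional determinacy of parity games: a prover win can be taken positional, hence finite-memory, and folding repeated positions turns the strategy into a derivation in which every infinite branch respects the parity condition, i.e. carries a $\nu$-trace. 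This is a (finite, cyclic) \NW proof of $\Gamma$.

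I expect the adequacy of the tableau game to be the main obstacle, and within it the counter-model construction from a refuter's winning strategy: one must check that the purely combinatorial parity/trace condition matches the fixpoint semantics exactly, which is where the bookkeeping of nested approximant ordinals along modal steps is heaviest. The use of positional determinacy then upgrades a possibly infinite derivation to the finite cyclic proofs that the system is designed to produce.
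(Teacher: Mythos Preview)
The paper does not prove this theorem; it is quoted from \cite{Niwinski1996} and used as a black box, so there is no proof in the paper to compare your proposal against.

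Your sketch follows the argument of the cited source and is correct in outline. Two remarks. First, in this paper \NW proofs are defined as (possibly infinite) \emph{trees}, not finite cyclic graphs, so for completeness you do not actually need positional determinacy or the folding step: any winning strategy for \Prover, written out as a tree with branching at \Refuter's \RuAnd-choices, already \emph{is} an \NW proof in the paper's sense. Positional determinacy buys regularity of that tree, which is stronger than what is being asserted here. Second, for the soundness direction the one-ordinal-per-$\nu$ phrasing is not quite enough once fixpoints are nested: to rule out a $\nu$-trace along your branch of false formulas you need the full signature, a tuple of ordinals indexed by the $\nu$-variables and compared lexicographically along the subsumption order, so that unfolding an outer $\nu$ strictly decreases its component while inner components may reset. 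You evidently have this mechanism in mind, but the sketch elides it.
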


\subsection{\Clo proofs}
In \Clo proofs annotations keep track of $\nu$-traces. In that sense the trace condition is replaced by local conditions on the annotations and \Clo proofs become finite. 

For each formal variable $x \in \Var$ fix an infinite set $\sfN_x$ of \emph{names} for $x$. We assume $\sfN_x \cap \sfN_y = \varnothing$ if $x \neq y$ and use symbols $\dx$ and $\dy$ as names for the formal variables $x$, $y$.
Let $\sfN := \bigcup_{x \in \Var} \sfN_x$. The subsumption order extends to names: 
$\dy \leq x$ if $\dy \in \sfN_y$ and $y \leq x$. For a finite word $a \in \sfN^*$ we let $a \leq x$ if $\dy \leq x$ for all $\dy \in a$. Let $\sqsubseteq$ denote the (reflexive) sub-word relation on $\sfN^*$.

An \emph{annotated formula} is a pair $(\phi,a)$, denoted by $\phi^a$, where $\phi$ is a closed formula and $a \in \sfN^*$. An \emph{annotated sequent} is a set of annotated formulas.

\begin{figure}[htb]
	\begin{align*}
		\begin{minipage}{0.28\textwidth}
			\begin{prooftree}
				\infer[left label=\AxLit:]0{ p^{\epsilon},\bar{p}^{\epsilon}}
			\end{prooftree}
		\end{minipage}
		\begin{minipage}{0.31\textwidth}
			\begin{prooftree}
				\hypo{ \Gamma, \varphi^{a},\psi^{a}}
				\infer[left label= \RuOr:]1{ \Gamma, (\varphi \lor \psi)^{a}}
			\end{prooftree}
		\end{minipage}
		\begin{minipage}{0.32\textwidth}
			\begin{prooftree}
				\hypo{ \Gamma, \varphi^{a} }
				\hypo{\Gamma, \psi^{a}}
				\infer[left label= \RuAnd:]2{ \Gamma, (\varphi \land \psi)^{a}}
			\end{prooftree}
		\end{minipage}\\
	\phantom{XXX}\\
		\begin{minipage}{0.28\textwidth}
			\begin{prooftree}
				\hypo{ \Gamma}
				\infer[left label=\RuWeak:]1{ \Gamma, \varphi^{a}}
			\end{prooftree}
		\end{minipage}
		\begin{minipage}{0.31\textwidth}
			\begin{prooftree}
				\hypo{ \Gamma, \varphi^{a}}
				\infer[left label= \RuBox:]1{ \Diamond \Gamma, \Box \varphi^{a}}
			\end{prooftree}
		\end{minipage}
		\begin{minipage}{0.32\textwidth}
			\begin{prooftree}
				\hypo{ \Gamma, \varphi[\eta x.\varphi]^{a}}
				\infer[left label=\RuEta:]1[~where $a \leq x$]{ \Gamma, \eta x.\varphi^{a}}
			\end{prooftree}
		\end{minipage}
	\\
	\phantom{XXX}\\
		\begin{minipage}{0.80\textwidth}
			\begin{prooftree}
				\hypo{ \varphi_1^{a_1},...,\varphi_n^{a_n}}
				\infer[left label=\RuExp:]1[~ where $a_i \sqsubseteq b_i$ for $i = 1,...,n$]{ 	\varphi_1^{b_1},...,\varphi_n^{b_n}}
			\end{prooftree}
		\end{minipage}\\
	\phantom{XXX}\\
		\begin{minipage}{0.80\textwidth}
			\begin{prooftree}
				\hypo{[\Gamma, \nu x. \varphi^{a\dx}]^{\dx}}
				\infer[no rule]1{{\vdots}}
				\infer[no rule]1{\Gamma, \varphi[\nu x. \varphi]^{a\dx}}
				\infer[left label=\RuClo:]1[~~ where $a \leq \dx$ and $\dx$ does not appear in $\Gamma$]{\Gamma, \nu x. \varphi^a}
			\end{prooftree}
		\end{minipage}
	\end{align*}
		
	\caption{Rules of \Clo}
	\label{fig.Clo}
\end{figure}	

\emph{\Clo derivations} are (possibly infinite) trees respecting the rules of Figure \ref{fig.Clo}, where all leaves $u$ are either labelled by \AxLit or by a discharge token $\dx$, such that an ancestor $v$ of $u$ is labelled by \RuClo[]. In the latter case we call $u$ a \emph{discharged assumption} and $v$ its \emph{companion} and say that $u$ is \emph{discharged} by \RuClo[] at $v$. All discharge rules \RuClo are labelled by a unique discharge token taken from the set $\sfN$, such that $\dx \in \sfN_x$ if the principal formula is $\nu x. \phi$. 
Note that, if one ignores the annotations, the rules \AxLit, \RuOr, \RuAnd, \RuWeak, \RuBox and \RuEta are the same as in \NW.

\begin{definition}
	Let \Clo be the proof system defined by the rules in Figure \ref{fig.Clo}. 	A \emph{\Clo proof} is a finite \Clo derivation. 
\end{definition}

\begin{proposition}
	\Clo is sound.
\end{proposition}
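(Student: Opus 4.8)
The plan is to deduce soundness of \Clo from the soundness of \NW by transforming any finite \Clo proof into an \NW proof of the same underlying (unannotated) sequent. Given a \Clo proof $\pi$ of $\Gamma$, I would first erase all annotations; the rules \AxLit, \RuOr, \RuAnd, \RuWeak, \RuBox and \RuEta then become exactly their \NW counterparts, while \RuExp collapses to a step whose premise and conclusion carry literally the same sequent and can be contracted away. To eliminate the cyclic structure I would then unfold $\pi$ in the standard way: since $\pi$ is finite, repeatedly replacing each discharged leaf by a fresh copy of the derivation rooted at its companion yields a well-defined, possibly infinite tree $\pi^{\infty}$ all of whose remaining leaves are instances of \AxLit, i.e. an \NW derivation of $\Gamma$.

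It then remains to verify the \NW proof condition: every infinite branch of $\pi^{\infty}$ carries a $\nu$-trace. Fix an infinite branch $\tau$. Because $\pi$ is finite, $\tau$ runs through copies of companion nodes infinitely often, and among the \RuClo[] companions met infinitely often I would single out the one, say $v^*$, that is closest to the root of $\pi$; a short argument using the fact that discharge edges always lead back up to an ancestor shows that $v^*$ is an ancestor in $\pi$ of every node $\tau$ visits infinitely often, so from some point on $\tau$ stays inside the subtree of $v^*$ and returns to copies of $v^*$ infinitely often. Let $\dx \in \sfN_x$ be its discharge token and $\nu x.\varphi$ its principal $\nu$-formula, so that each return is separated by one traversal of the loop from $v^*$ up to a copy of its discharged assumption $[\Gamma,\nu x.\varphi^{a\dx}]^{\dx}$.

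The heart of the proof is to read a $\nu$-trace off the name $\dx$. Since $\dx$ is fresh and introduced only at $v^*$, the formulas carrying $\dx$ in their annotation descend from the formula $\varphi[\nu x.\varphi]^{a\dx}$ produced by \RuClo[] down to the $\dx$-annotated discharged assumption, and I would follow them to obtain a trace that loops around $v^*$. The first point to establish is that $\dx$ is present throughout each loop: it is added by \RuClo[] and, reading a branch upward, can only be deleted by \RuExp, whose side condition $a_i \sqsubseteq b_i$ only \emph{shrinks} annotations going up; since $\dx$ reappears at the discharged assumption it cannot have been removed in between. The decisive observation is then that while $\dx$ occurs in the annotation no strictly outer fixpoint can be unfolded: unfolding $\eta y.\psi$ requires the side condition $a \leq y$ of \RuEta (and of \RuClo[]), which forces $\dx \leq y$ and hence $x \leq y$, incompatible with $y < x$. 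Consequently, along the trace following the $\dx$-carrying formula every fixpoint unfolded infinitely often satisfies $x \leq y$, whereas $\nu x.\varphi$ itself is unfolded once per loop; thus $\nu x.\varphi$ is the $\leq$-minimal fixpoint formula occurring infinitely often, so $\tau$ carries a $\nu$-trace. Hence $\pi^{\infty}$ is an \NW proof, and soundness of \NW yields validity of $\Gamma$.

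The step I expect to be the main obstacle is the annotation analysis of the third paragraph, i.e. showing precisely that the local discharge discipline of \RuClo[], \RuExp and \RuEta captures the global $\nu$-trace condition of \NW: that the fresh token $\dx$ really does trace a single family of descendants around the loop, that it can leave an annotation only via \RuExp, and that the \RuEta side condition thereby blocks every strictly outer unfolding. Once this link is secured, the remaining items --- contracting \RuExp, well-definedness of the unfolding, and matching the other rules with \NW --- are routine bookkeeping.
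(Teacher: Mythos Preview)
Your proposal is correct and follows essentially the same route as the paper: unfold the cyclic \Clo proof into an infinite \NW derivation (erasing annotations, contracting \RuExp, replacing \RuClo[] by \RuNu) and then argue that every infinite branch carries a $\nu$-trace by reading it off from the names. The paper's own proof is much terser --- it simply asserts that ``the names occurring in $\rho^*$ give a $\nu$-trace for every infinite path in $\pi$'' --- whereas you actually unpack this step: isolating the outermost infinitely-recurring companion $v^*$, using freshness of its token $\dx$ together with the sub-word side condition of \RuExp to see that $\dx$ persists along the loop, and invoking the $a \leq x$ side condition of \RuEta/\RuClo[] to rule out unfoldings of strictly outer fixpoints. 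That is exactly the content hidden behind the paper's one-line claim, so your proof is a faithful (and more detailed) version of the same argument.
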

\begin{proof}
	Let $\rho$ be a \Clo proof of a sequent $\Gamma$. Let $\rho^*$ be the infinite unfolding of $\rho$, i.e., the \Clo derivation defined from $\rho$ by inductively replacing discharged assumptions by the subproof rooted at its companion. Replacing \RuClo[] rules by \RuNu, removing nodes labelled by \RuExp and removing all annotations in $\rho^*$ yields an \NW derivation $\pi$ of $\Gamma$. The names occurring in $\rho^*$ give a $\nu$-trace for every infinite path in $\pi$ and thus $\pi$ is an \NW proof. Hence the soundness of \Clo follows from the soundness of \NW.
\end{proof}

\section{Incompleteness of \Clo}
In the main section of this work we prove the incompleteness of \Clo. This is done by defining a sequent $\Phi$, which is probable in \NW, but shown to be unprovable in \Clo.

\subsection{Definition of $\Phi$}
Consider the formula \[\nu x. \varphi \equiv \nu x. \Diamond(\bar{p}\land(\Box x  \lor \Diamond \nu y. \Box(p \land (\Box x \lor \Diamond y)))).\]
As abbreviation let  
\[\nu y. \psi \equiv \nu y. \Box(p \land (\Box \nu x. \varphi \lor \Diamond y)))\]
and
\[ \chi \equiv \Box \nu x. \varphi \lor \Diamond \nu y. \psi.\]
In order to understand the relation between these formulas, observe that $\phi[\nu x.\phi] \equiv \Diamond(\bar{p} \land \chi)$ and $\psi[\nu y. \psi] \equiv \Box (p \land \chi)$. The reason for the excessive use of modalities in the definition of $\nu x. \phi$ is to restrict possibilities in the proof search: For most sequents only one rule apart from weakenings will be applicable.

For the rest of the paper let $\Phi = \nu x. \phi, \nu y. \psi$. We want to show that $\bigvee \Phi$ is a valid $\mu$-calculus formula, yet unprovable in \Clo.

\subsection{Validity}
The following is an \NW proof $\pi$ of $\Phi$, where the subproofs $\pi'$ are isomorphic to the whole proof $\pi$:

\begin{align*}
	\begin{prooftree}
		\hypo{\bar{p},p}
		\hypo{\pi'}
		\infer[no rule]1{\nu x.\phi,\nu y. \psi}
		\infer[left label=B:]1[\RuBox]{\Box \nu x.\phi,\Diamond \nu y. \psi}
		\infer1[\RuWeak]{\Box \nu x.\phi,\Diamond \nu y. \psi, p}
		\infer1[\RuOr]{\chi,p}
		\infer2[\RuAnd]{\bar{p}\land \chi,p}
		\hypo{\pi'}
		\infer[no rule]1{\nu x.\phi,\nu y. \psi}
		\infer[left label=C:]1[\RuBox]{\Box \nu x.\phi,\Diamond \nu y. \psi}
		\infer1[\RuWeak]{\bar{p}, \Box \nu x.\phi,\Diamond \nu y. \psi}
		\infer1[\RuOr]{\bar{p},\chi}
		\hypo{\pi'}
		\infer[no rule]1{\nu x.\phi,\nu y. \psi}
		\infer[left label=]1[\RuBox]{\Box \nu x.\phi,\Diamond \nu y. \psi}
		\infer1[\RuOr]{\chi}
		\infer2[\RuAnd]{\bar{p}\land \chi,\chi}
		\infer2[\RuAnd]{\bar{p}\land\chi, p \land \chi}
		\infer1[\RuBox]{\phi[\nu x.\phi],\psi[\nu y. \psi]}
		\infer1[\RuNu]{\phi[\nu x.\phi],\nu y. \psi}
		\infer1[\RuNu]{\nu x.\phi,\nu y. \psi}
	\end{prooftree}
\end{align*}

As there are no $\mu$-formulas, all traces in $\pi$ are $\nu$-traces. Hence on every infinite branch of $\pi$ there is a $\nu$-trace and $\pi$ is an \NW proof. Using the fact that \NW is a sound proof system it follows that $\bigvee \Phi$ is a valid $\mu$-calculus formula.

\subsection{Proof idea}\label{subsec.proofIdea}
The more difficult task is to show that $\Phi$ is not provable in \Clo. To gather some intuition we first consider the above \NW proof $\pi$ and see why it can not be translated into a \Clo proof $\rho$. At the node $B$ both formulas are descendants of $\nu x.\phi$ at the root, whereas at the node $C$ both formulas are descendants of $\nu y.\psi$ at the root. Replacing the \RuNu rules by \RuClo[] rules yields the following \Clo derivation $\rho_0$:\footnote{We omit the rightmost branch, as it is not important for this example.}

\begin{align*}
	\begin{prooftree}
		\hypo{\bar{p}^\dx,p^\dy}
		\hypo{[\nu x.\phi^\dx,\nu y. \psi]^\dx}
		\infer[left label=B:]1[\RuExp]{\nu x.\phi^\dx,\nu y. \psi^\dx}
		\infer1[\RuWeak,\RuBox]{\Box \nu x.\phi^\dx,\Diamond \nu y. \psi^\dx, p^\dy}
		\infer1[\RuOr]{\chi^\dx,p^\dy}
		\infer2[\RuAnd]{\bar{p}\land \chi^\dx,p^\dy}
		\hypo{\phi[\nu x.\phi],\nu y. \psi^\dy}
		\infer[left label=C:]1[\RuExp]{\phi[\nu x.\phi]^\dy,\nu y. \psi^\dy}
		\infer1[\RuNu]{\nu x.\phi^\dy,\nu y. \psi^\dy}
		\infer1[\RuWeak,\RuBox]{\bar{p}^\dx,\Box \nu x.\phi^\dy,\Diamond \nu y. \psi^\dy}
		\infer1[\RuOr]{\bar{p}^\dx,\chi^\dy}
		\hypo{\cdots}
		\infer2[\RuAnd]{\bar{p}\land \chi^\dx,\chi^\dy}
		\infer2[\RuAnd]{\bar{p}\land\chi^\dx, p \land \chi^\dy}
		\infer1[\RuBox]{\phi[\nu x.\phi]^\dx,\psi[\nu y. \psi]^\dy}
		\infer1[\RuClo[\dy]]{\phi[\nu x.\phi]^\dx,\nu y. \psi}
		\infer1[\RuClo]{\nu x.\phi,\nu y. \psi}
	\end{prooftree}
\end{align*}
The leaf B can be discharged by \RuClo, yet it is impossible to discharge C by \RuClo[\dy], as $\phi[\nu x.\phi]$ is annotated by $\dx$ at the companion node and there is no way to obtain the same annotation at C. Thus we can not discharge C by \RuClo[\dx] nor by \RuClo[\dy].  We will now see that the same problem occurs in all \Clo derivations of $\Phi$.

As \Clo is cut-free we can do proof search adapted for cyclic, annotated proofs in the following way: First we consider all \NW proofs of $\Phi$. Then we show that in all of those \NW proofs it is impossible to replace some \RuNu rules by \RuClo[] rules with discharged assumptions in a way that a \Clo proof is obtained.

\subsection{\NW proofs of $\Phi$}

Let $\Phi = \nu x. \phi,\nu y. \psi$. We want to consider all possible \NW proofs of $\Phi$. Thus let $\pi$ be any \NW proof of $\Phi$ and $r$ be any node in $\pi$ labelled by the sequent $\Phi$. We want to have a look at the subtree rooted at $r$, where leaves are axioms or nodes labelled by $\Phi$.

We begin by claiming that the first two applied rules at $r$ have to be \RuNu rules with respective principal formulas $\nu x. \phi$ and $\nu y. \psi$, in no particular order. 
The only other rules which may be applied are instances of \RuWeak. Yet this is impossible, as $\nu x. \phi$ and $\nu y. \psi$ are not valid, thus not provable. 

Hence the first two applied rules are \RuNu rules; for now the order in which the formulas are unfolded first is not important. The lowest part of the proof-tree rooted at $r$ looks as follows:

\begin{align*}
	\begin{prooftree}
		\hypo{\bar{p}\land \chi, p \land \chi}
		\infer1[\RuBox]{\phi[\nu x.\phi],\psi[\nu y. \psi]}
		\infer1[\RuNu]{\phi[\nu x.\phi],\nu y. \psi}
		\infer1[\RuNu]{\nu x.\phi,\nu y. \psi}
	\end{prooftree}
\end{align*}

Assume that \RuWeak is only applied for inessential literals, then the proof tree rooted at $r$ looks as follows, up to the order of the applied rules:

\begin{align*}
	\begin{prooftree}
		\hypo{\bar{p},p}
		\hypo{\nu x.\phi,\nu y. \psi}
		\infer[left label=B:]1[\RuBox]{\Box \nu x.\phi,\Diamond \nu y. \psi} 
		\infer1[\RuWeak]{\Box \nu x.\phi,\Diamond \nu y. \psi, p} 
		\infer1[\RuOr]{\chi,p}
		\infer[left label=A:]2[\RuAnd]{\bar{p}\land \chi,p}
		\hypo{\nu x.\phi,\nu y. \psi}
		\infer[left label=C:]1[\RuBox]{\Box \nu x.\phi,\Diamond \nu y. \psi}
		\infer1[\RuWeak]{\bar{p},\Box \nu x.\phi,\Diamond \nu y. \psi}
		\infer1[\RuOr]{\bar{p},\chi}
		\hypo{\nu x.\phi,\nu y. \psi}
		\infer[left label=D:]1[\RuBox]{\Box \nu x.\phi,\Diamond \nu y. \psi}
		\infer1[\RuOr]{\chi}
		\infer2[\RuAnd]{\bar{p}\land \chi,\chi}
		\infer2[\RuAnd]{\bar{p}\land \chi, p \land \chi}
		\infer1[\RuBox]{\phi[\nu x.\phi],\psi[\nu y. \psi]}
		\infer[]1[\RuNu]{\phi[\nu x.\phi],\nu y. \psi}
		\infer[]1[\RuNu]{\nu x.\phi,\nu y. \psi}
	\end{prooftree}
\end{align*}
Note that, in whatever order the rules are applied, we end up with the same four nodes A,B,C and D.

At last we have to check the case, that an instance of \RuWeak is applied to a non-literal. Leaving those weakenings aside which lead to invalid formulas, there are only two possible \NW proofs (up to the order of the unfoldings):
\begin{align*}
	\begin{prooftree}
		\hypo{\bar{p},p}
		\hypo{\nu x.\phi,\nu y. \psi}
		\infer[left label=B:]1[\RuBox]{\Box \nu x.\phi,\Diamond \nu y. \psi}
		\infer1[\RuWeak]{\Box \nu x.\phi,\Diamond \nu y. \psi, p}
		\infer1[\RuOr]{\chi,p}
		\infer[left label=A:]2[\RuAnd]{\bar{p}\land \chi,p}
		\hypo{\nu x.\phi,\nu y. \psi}
		\infer[left label=C:]1[\RuBox]{\Box \nu x.\phi,\Diamond \nu y. \psi}
		\infer1[\RuOr]{\chi}
		\infer1[\RuWeak]{\bar{p}\land \chi,\chi}
		\infer2[\RuAnd]{\bar{p}\land \chi, p \land \chi}
		\infer1[\RuBox]{\phi[\nu x.\phi],\psi[\nu y. \psi]}
		\infer1[\RuNu]{\phi[\nu x.\phi],\nu y. \psi}
		\infer1[\RuNu]{\nu x.\phi,\nu y. \psi}
	\end{prooftree}
\end{align*}

\begin{align*}
	\begin{prooftree}
		\hypo{\bar{p},p}
		\hypo{\nu x.\phi,\nu y. \psi}
		\infer[left label=C:]1[\RuBox]{\Box \nu x.\phi,\Diamond \nu y. \psi}
		\infer1[\RuWeak]{\bar{p},\Box \nu x.\phi,\Diamond \nu y. \psi}
		\infer1[\RuOr]{\bar{p},\chi}
		\infer[left label=A:]2[\RuAnd]{\bar{p},p \land \chi}
		\hypo{\nu x.\phi,\nu y. \psi}
		\infer[left label=B:]1[\RuBox]{\Box \nu x.\phi,\Diamond \nu y. \psi}
		\infer1[\RuOr]{\chi}
		\infer1[\RuWeak]{\chi,p \land \chi}
		\infer2[\RuAnd]{\bar{p}\land \chi, p \land \chi}
		\infer1[\RuBox]{\phi[\nu x.\phi],\psi[\nu y. \psi]}
		\infer[]1[\RuNu]{\phi[\nu x.\phi],\nu y. \psi}
		\infer[]1[\RuNu]{\nu x.\phi,\nu y. \psi}
	\end{prooftree}
\end{align*}
Now we have considered all possible \NW proofs rooted at a node labelled by $\Phi$, where leaves are axioms or nodes labelled by $\Phi$. We can state some immediate observations.

\medskip
\noindent
Let $\pi$ be an \NW proof of $\Phi$.
	\begin{enumerate}
		\item We call a node which is labelled by the sequent $\Phi$ an \emph{unfolding node}. An unfolding node and its child are always labelled by \RuNu rules with principal formulas $\nu x. \phi$ and $\nu y. \psi$, in no particular order. 
		\item The \emph{unfolding tree} $\calT_\pi = (T,E)$ of $\pi$ is the tree consisting of all unfolding nodes in $\pi$, such that $E(u,v)$ if $v$ is a descendant of $u$ with no other unfolding nodes between $u$ and $v$.
		\item Let $v$ be an unfolding node and $u \in \calT_\pi$ its parent. We call $v$ an \emph{$x$-node} if there are no traces from $\nu y. \psi$ at $u$ to $\nu x. \phi$ or $\nu y. \psi$ at $v$. We call $v$ a \emph{$y$-node} if there are no traces from $\nu x. \phi$ at $u$ to $\nu x. \phi$ or $\nu y. \psi$ at $v$. 
	\end{enumerate}

\begin{lemma}\label{lem.preProof}
	Let $\pi$ be an \NW proof of $\Phi$. Every unfolding node of $\pi$ has either two or three children in $\calT_\pi$, where exactly one is an $x$-node and exactly one is a $y$-node.
\end{lemma}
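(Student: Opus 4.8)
The plan is to read the lemma directly off the exhaustive case analysis just completed. For a fixed unfolding node $u$, the subtree of $\pi$ rooted at $u$ whose leaves are axioms or the nearest unfolding nodes must, up to the order in which $\nu x.\phi$ and $\nu y.\psi$ are unfolded, coincide with one of the three displayed shapes: the general shape, whose unfolding-node children are $B$, $C$, $D$, and the two variants with a non-literal weakening, whose unfolding-node children are $B$ and $C$. This already settles the counting part of the statement: every unfolding node has two or three children in $\calT_\pi$. It remains to classify these children as $x$-nodes, $y$-nodes, or neither.

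The key observation driving the classification is that in every one of the three shapes each unfolding-node child is reached by a single \RuBox applied to a sequent $\Box \nu x.\phi, \Diamond \nu y.\psi$, which in turn is produced by an \RuOr (and, in some branches, a literal \RuWeak) from a single occurrence of $\chi$. Hence both formulas of the child are descendants of that one occurrence of $\chi$, and the type of the child is completely determined by the origin of this $\chi$. Since $\phi[\nu x.\phi] \equiv \Diamond(\bar{p} \land \chi)$ and $\psi[\nu y.\psi] \equiv \Box(p \land \chi)$, after the two unfoldings and the lowest \RuBox the sequent is $\bar{p} \land \chi, p \land \chi$; the copy of $\chi$ inside $\bar{p} \land \chi$ lies on a trace starting from $\nu x.\phi$ at $u$, and the copy inside $p \land \chi$ lies on a trace starting from $\nu y.\psi$ at $u$. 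First I would follow, branch by branch through the \RuAnd-splittings, which of these two copies survives as the $\chi$ feeding each child.

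Carrying this out yields the classification. For $B$ the surviving $\chi$ is the copy from $\bar{p} \land \chi$, so both formulas at $B$ are reachable only from $\nu x.\phi$ at $u$; there is no trace from $\nu y.\psi$, and $B$ is an $x$-node. Symmetrically the $\chi$ feeding $C$ is the copy from $p \land \chi$, so $C$ is a $y$-node. In the general shape the extra child $D$ is obtained from the premise $\{\chi\}$ of the \RuAnd whose principal formula is $\bar{p} \land \chi$: there the principal-descendant copy of $\chi$ (tracing to $\nu x.\phi$) and the side copy of $\chi$ (tracing to $\nu y.\psi$) are identified in the set-sequent, so the single $\chi$ at $D$ lies on traces from both $\nu x.\phi$ and $\nu y.\psi$, and $D$ is neither an $x$- nor a $y$-node. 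Finally, since each child's formulas descend from at least one of $\nu x.\phi, \nu y.\psi$ at $u$, no child can be both an $x$- and a $y$-node; in particular $B \neq C$, and $B$ and $C$ are the unique $x$- and $y$-node children in every shape.

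I expect the main obstacle to be the trace bookkeeping in the set-based sequents, and specifically the merging phenomenon at $D$: one must justify that when the two occurrences of $\chi$ collapse into one, a trace may legitimately be routed back to either $\nu x.\phi$ or $\nu y.\psi$, which is precisely what prevents $D$ from being an $x$- or $y$-node. The remaining checks are routine: that a literal \RuWeak cannot change the origin of the surviving $\chi$, and that the two weakening variants behave exactly like the general shape with the child $D$ absent.
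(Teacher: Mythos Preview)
Your proposal is correct and follows exactly the paper's approach: both read the lemma off the exhaustive case analysis preceding it, identifying $B$ as the unique $x$-node, $C$ as the unique $y$-node, and $D$ (when present) as neither. The paper's proof is a single sentence to this effect, while you spell out the trace-chasing (in particular the set-merger at $D$) that the paper leaves implicit.
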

\begin{proof}
	In the above proofs the nodes B are $x$-nodes, the nodes C are $y$-nodes and D are neither.
\end{proof}

\subsection{\Clo derivations of $\Phi$}

\begin{definition}
	Let $\pi$ be an \NW proof of a sequent $\Gamma$ and let $\rho$ be a \Clo derivation of $\Gamma$. We say that $\rho$ is \emph{obtained from $\pi$} if $\pi$ can be transformed to $\rho$ by 
	\begin{enumerate}
		\item changing some \RuNu rules to \RuClo[] rules with discharged assumptions, such that the proof tree is pruned at discharged assumptions and
		\item adding annotations and \RuExp rules accordingly.
	\end{enumerate}
\end{definition}

\Clo derivations $\rho$ obtained from an \NW proof $\pi$ have a very similar structure to the \NW proof $\pi$. Except nodes labelled by \RuExp, $\rho$ consists of the same nodes as in $\pi$, where the tree is pruned at discharged assumptions. We transfer the concepts of unfolding trees, $x$-nodes and $y$-nodes to \Clo derivations and get similar results for \Clo derivations obtained from \NW proofs.

\begin{definition}
	Let $\rho$ be a \Clo derivation of $\Phi$. Similarly as for \NW proofs we call a node in $\rho$, which is labelled by the sequent $\Phi$ and not by \RuExp, an \emph{unfolding node}.  The \emph{unfolding tree} $\calT_\rho$ of $\rho$, \emph{$x$-nodes} and \emph{$y$-nodes} are defined analogously as for \NW proofs. We call an unfolding node $u$ \emph{root-like} if no node in the subtree of $\rho$ rooted at $u$ is discharged by a \RuClo[] rule at an ancestor-node of $u$.
\end{definition}

\begin{lemma}\label{lem.obtainedXnode}
	Let $\pi$ be an \NW proof of $\Phi$ and $\rho$ be a \Clo derivation obtained from $\pi$. Every root-like unfolding node of $\rho$ has either two or three children in $\calT_\rho$, where exactly one is an $x$-node and exactly one is a $y$-node.
\end{lemma}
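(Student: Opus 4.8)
The plan is to transfer Lemma~\ref{lem.preProof} from $\pi$ to $\rho$, exploiting the fact that, around a root-like unfolding node, the \Clo derivation $\rho$ is just a copy of the \NW proof $\pi$ decorated with annotations and \RuExp nodes, and pruned at discharged assumptions. The notions of unfolding node, $\calT$-edge and $x$-/$y$-node are all insensitive to annotations and to \RuExp, so if I can show that the local picture of $\rho$ at $u$ coincides with that of the corresponding node of $\pi$, the statement follows immediately from Lemma~\ref{lem.preProof}.

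First I would record what ``root-like'' buys us. Since $u$ lies in the subtree of $\rho$ rooted at $u$, the root-like condition forbids $u$ from being discharged at an ancestor; hence $u$ is not a discharged assumption but a genuine node at which a rule is applied. As $\rho$ is obtained from $\pi$, this rule is the image of the \RuNu rule at the corresponding unfolding node of $\pi$: either a \RuNu or a \RuClo[] rule unfolding one of $\nu x.\phi$, $\nu y.\psi$, followed by the unfolding of the other. In particular both fixpoints are unfolded, exactly as in $\pi$.

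Next I would identify the children of $u$ in $\calT_\rho$. In $\pi$, along the segment between an unfolding node and each of its $\calT_\pi$-children (the nodes B, C and possibly D) the only fixpoint unfoldings occur at the unfolding node itself; the intervening rules are \RuBox, \RuAnd, \RuOr and \RuWeak, and none of the intervening sequents equals $\Phi$. Passing to $\rho$ replaces some \RuNu rules by \RuClo[] rules, inserts \RuExp nodes, adds annotations, and prunes at discharged assumptions. None of these operations creates a non-\RuExp node labelled $\Phi$ strictly between $u$ and those nodes: \RuExp nodes, even when labelled $\Phi$, are by definition not unfolding nodes, and discharged assumptions are themselves labelled $\Phi$ and so occur only at the level of the would-be children. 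Consequently the $\calT_\rho$-children of $u$ are exactly the images of the $\calT_\pi$-children B, C and possibly D of the corresponding node of $\pi$, each either unfolded further or turned into a (leaf) discharged assumption, but in every case still an unfolding node. I would then check that classification is preserved: $x$-/$y$-node status depends only on the traces between $u$ and a child $v$, hence only on the formulas along the segment and the descendant relation, not on annotations. Annotations are irrelevant to traces, \RuExp leaves every formula unchanged and is transparent to descendants, and a \RuClo[] rule unfolds its fixpoint exactly as the \RuNu rule it replaces, inducing the same descendant step. Thus each segment of $\rho$ carries precisely the same traces as the corresponding segment of $\pi$, so $v$ is an $x$-node (resp.\ $y$-node) in $\rho$ iff its counterpart is an $x$-node (resp.\ $y$-node) in $\pi$. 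Invoking Lemma~\ref{lem.preProof} then gives that $u$ has two or three children in $\calT_\rho$, exactly one an $x$-node and exactly one a $y$-node.

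The main obstacle is precisely the bookkeeping above: one must argue carefully that the inserted \RuExp nodes and the replacement of \RuNu rules by \RuClo[] rules neither create nor destroy unfolding nodes, neither alter the edge relation of the unfolding tree, nor disturb the descendant/trace structure defining $x$- and $y$-nodes, and that discharged-assumption children, though leaves of $\rho$, are still counted and classified exactly as their unfolded counterparts in $\pi$. Once this correspondence is pinned down, the statement is a direct consequence of Lemma~\ref{lem.preProof}.
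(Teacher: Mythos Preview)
Your approach is essentially the paper's: map the root-like unfolding node $u$ of $\rho$ to the corresponding unfolding node of $\pi$, argue that the path to each $\calT_\pi$-child survives the transformation (\RuNu$\to$\RuClo[], insertion of \RuExp, pruning at discharged assumptions), observe that traces and hence $x$/$y$-classification are unaffected, and then invoke Lemma~\ref{lem.preProof}. The paper's proof is terser but follows exactly this line.

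One small wrinkle in your justification: you write that ``discharged assumptions are themselves labelled $\Phi$''. That is not true in general. If the \emph{upper} of the two unfolding rules at $u$ is turned into a \RuClo[], its companion sequent is $\phi[\nu x.\phi],\nu y.\psi$ (or $\nu x.\phi,\psi[\nu y.\psi]$, depending on the order), so its discharged assumptions carry that sequent rather than $\Phi$. This does not break the argument, because that sequent likewise does not occur strictly between $u$ and any $\calT$-child (it only reappears as the child of the \emph{next} unfolding node), so no pruning can truncate the segment; but your stated reason for this is inaccurate. The paper sidesteps the issue by simply asserting that root-likeness ensures the path $\tau'$ is transformed into a path $\tau$ in $\rho$; your more explicit bookkeeping is a good instinct, it just needs this one correction.
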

\begin{proof}
	Let $u$ be a root-like unfolding node in $\rho$ and $u'$ be the node in $\pi$, from which $u$ is obtained. It holds that $u'$ is an unfolding node in $\pi$. Let $v'$ be a child of $u'$ in $\calT_\pi$ and $\tau'$ be the path from $u'$ to $v'$. As no node in the subtree of $\rho$ rooted at $u$ is discharged by an ancestor-node of $u$, the path $\tau'$ has been transformed to a path $\tau$ from $u$ to $v$ in $\rho$, where only \RuNu rules were changed to \RuClo[] rules and nodes labelled by \RuExp were added. Thus $v$ is an unfolding node in $\rho$, which is an $x$-node (a $y$-node) iff $v'$ is an $x$-node (a $y$-node).

	Thus $u$ has the same number of children in $\calT_\rho$ as $u'$ has in $\calT_\pi$ and a child $v$ in $\rho$ is an $x$-node (a $y$-node) iff $v'$ in $\pi$ is an $x$-node (a $y$-node). Hence the statement follows from Lemma \ref{lem.preProof}.
\end{proof}

\begin{lemma}\label{lem.CloDerivNoDischarge}
	Let $\rho$ be a \Clo derivation obtained from an \NW proof $\pi$ of $\Phi$. Let $u$ and $v$ be unfolding nodes in $\rho$, such that $v$ is a child of $u$ in $\calT_\rho$ and $v$ is an $x$-node. Let $u'$ be between $u$ and $v$ in $\rho$, such that $u'$ is labelled by \RuClo[\dy] with principal formula $\nu y. \psi$. Then none of its discharged assumptions are in the subtree of $\rho$ rooted at $v$. 
	
	The same holds, if $v$ is a $y$-node and the principal formula at $u'$ is $\nu x. \phi$. \label{lem.replacingNuSubtree}
\end{lemma}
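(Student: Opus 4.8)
The plan is to argue by contradiction. Suppose that some discharged assumption $w$ of the rule $\RuClo[\dy]$ at $u'$ lies in the subtree of $\rho$ rooted at $v$. I will extract from $w$ a trace from $\nu y.\psi$ at $u$ to a formula of the sequent $\Phi$ at $v$, which directly contradicts the assumption that $v$ is an $x$-node.

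The first step is to observe that the discharge token $\dy$ marks a $\nu y.\psi$-trace. By the side condition of $\RuClo$ the token $\dy$ is fresh, and by the convention that discharge rules carry unique tokens it is the only rule of this kind in $\rho$; hence the name $\dy$ enters annotations only at $u'$, where the principal formula $\nu y.\psi^{a}$ is unfolded to $\psi[\nu y.\psi]^{a\dy}$. Every rule other than $\RuExp$ leaves the names already present in an annotation in place ($\RuClo$ only appends its own fresh token to its principal), while $\RuExp$ can only delete names on passing to a premise, so no rule ever creates a further occurrence of $\dy$. Therefore the occurrence of $\dy$ in $\nu y.\psi^{a\dy}$ at the discharged leaf $w$ must have been propagated from $u'$ along a single trace. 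Prepending the principal formula at $u'$, this gives a trace $T$ that starts at $\nu y.\psi$ at $u'$, runs along the $\nu y.\psi$-lineage, and ends at $w$.

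The second step extends $T$ through $v$ and down to $u$. Since $u'$ lies below $v$ and $w$ lies in the subtree rooted at $v$, the node $v$ lies on the tree-path from $u'$ to $w$, so $T$ passes through $v$ and is there carried by one of the two formulas $\nu x.\phi$, $\nu y.\psi$ of $\Phi$. (In fact it must be carried by $\nu y.\psi$, since $\dy$ cannot survive on the $\nu x.\phi$-lineage: unfolding $\nu x.\phi$ requires an annotation $\leq x$, whereas $y \not\leq x$ because $y$ does not occur free in the closed formula $\nu x.\phi$; but this refinement is not needed below.) To extend $T$ below $u'$ I use the shape of $\NW$-proofs of $\Phi$: because $v$ is a child of $u$ in $\calT_{\rho}$, no unfolding node lies strictly between $u$ and $v$. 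Every copy of $\nu y.\psi$ created by unfolding $\nu x.\phi$ or $\nu y.\psi$ occurs guarded, inside $\chi$ as $\Diamond\nu y.\psi$, and can only become the principal formula of a rule at the next unfolding node, which here is $v$ itself. Hence the only $\nu y.\psi$ unfolded between $u$ and $v$ is the root copy already present in $\Phi$ at $u$, and this is the principal formula of $\RuClo[\dy]$ at $u'$. Consequently $\nu y.\psi$ at $u'$ is a descendant of $\nu y.\psi$ at $u$, and concatenating this descent with $T$ yields a trace from $\nu y.\psi$ at $u$ to a formula of $\Phi$ at $v$, contradicting the definition of $x$-node. The case where $v$ is a $y$-node and the principal formula at $u'$ is $\nu x.\phi$ is symmetric, interchanging the roles of $x$ and $y$.

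The main obstacle is the bookkeeping in these two extension steps. I must verify that $\dy$ genuinely witnesses one uninterrupted trace, which relies on $\RuExp$ only ever removing names upward and on the freshness and uniqueness of the discharge token; and I must pin down precisely which fixpoint is unfolded at $u'$. The latter is exactly where the deliberately heavy nesting of modalities in $\nu x.\phi$ is used: it keeps every regenerated fixpoint formula guarded until the next unfolding node, so that no $\chi$-derived copy of $\nu y.\psi$ can be unfolded strictly between $u$ and $v$, forcing $u'$ to unfold the root copy inherited from $u$.
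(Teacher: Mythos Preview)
Your proof is correct and follows essentially the same approach as the paper's: both arguments hinge on the observation that the fresh name $\dy$ introduced by $\RuClo[\dy]$ at $u'$ propagates only along traces of the principal formula $\nu y.\psi$, and that this trace extends down to $\nu y.\psi$ at $u$ (because between $u$ and $u'$ only $\RuNu$, $\RuClo$ and $\RuExp$ rules occur), so the $x$-node hypothesis forbids any such trace from reaching $v$ and hence $\dy$ cannot appear in the subtree rooted at $v$. Your presentation is contrapositive and more detailed---in particular you spell out the tracking-by-names argument and the guardedness reason why only the root copy of $\nu y.\psi$ can be unfolded between $u$ and $v$---but the underlying idea is identical to the paper's direct argument.
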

\begin{proof}
	All nodes between $u$ and $u'$ are labelled by \RuNu, \RuClo[] or \RuExp, thus any trace from $\nu y.\phi$ at $u'$ would also give a trace from $\nu y.\phi$ at $u$. As $v$ is an $x$-node, there are no traces from $\nu y. \psi$ at $u$ to $\nu x. \phi$ or $\nu y. \psi$ at $v$ and therefore also none from $\nu y. \psi$ at $u'$. Hence the name $\dy$ introduced by the rule \RuClo[\dy] does not occur in the subtree of $\rho$ rooted at $v$.
\end{proof}

As an illustration of Lemma \ref{lem.CloDerivNoDischarge} consider the \Clo derivation $\rho_0$ presented in Subsection \ref{subsec.proofIdea}. Let $u$ be the root of $\rho_0$, $u'$ its child and $v$ be the node B. Lemma \ref{lem.CloDerivNoDischarge} states that there can be no discharged assumption of \RuClo[\dy] in the subproof rooted at B. This can be seen as the name $\dy$ does not occur at B. 

Similarly, there are no discharged assumptions of \RuClo in the subproof of $\rho_0$ rooted at C.

\subsection{\Clo proofs of $\Phi$}
We show that every \Clo proof can be obtained from some \NW proof. On the other hand we see that every \Clo derivation of $\Phi$ obtained from an \NW proof is infinite, and thus not a \Clo proof.

\begin{lemma}
	Every \Clo proof $\rho$ of a sequent $\Gamma$ can be obtained from some \NW proof $\pi$ of $\Gamma$.
\end{lemma}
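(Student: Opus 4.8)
The plan is to show that the relation ``obtained from'' is exactly the inverse of the unfolding construction already used in the proof of soundness of \Clo. Concretely, given a \Clo proof $\rho$ of $\Gamma$, I would take $\pi$ to be the \NW proof that the soundness argument associates to $\rho$, and then verify that the very steps producing $\pi$ from $\rho$ can be read backwards as a transformation of $\pi$ into $\rho$ of exactly the kind demanded by the definition of ``obtained from''.

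First I would recall the construction of $\pi$. Let $\rho^*$ be the infinite unfolding of $\rho$, obtained by inductively replacing each discharged assumption by the subproof rooted at its companion, and let $\pi$ be the \NW derivation resulting from $\rho^*$ by replacing every \RuClo[] rule with \RuNu, removing the nodes labelled \RuExp, and erasing all annotations. As established in the soundness proof, every leaf of $\rho^*$ is an \AxLit-leaf (discharged assumptions having been replaced), every infinite branch carries a $\nu$-trace read off from the names, and hence $\pi$ is a genuine \NW \emph{proof} of $\Gamma$. This already discharges the requirement that $\pi$ be a proof rather than a mere derivation.

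Next I would set up the node correspondence. The unfolding carries a canonical origin map $p$ from the nodes of $\rho^*$ to those of $\rho$, under which the ``step-$0$'' copy of $\rho$ (the nodes present before any replacement is carried out) embeds into $\rho^*$, and through it into $\pi$ after forgetting \RuExp and annotations. Each discharged assumption $d$ of $\rho$ is a leaf of $\rho$, and in $\rho^*$ its position is occupied by a fresh copy of the subproof rooted at its companion $c$; in $\pi$ the root of that copy is a \RuNu node. The transformation $\pi \to \rho$ is then: change to \RuClo[] exactly those \RuNu nodes of $\pi$ that lie in the step-$0$ copy and are images of companions of $\rho$; prune $\pi$ at the root of each copy occupying a discharged-assumption position, turning it into a discharged assumption of the corresponding (step-$0$) companion; and finally reinsert the \RuExp nodes and the annotations exactly as they occur in $\rho$. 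Since distinct discharged assumptions of $\rho$ are distinct leaves, their positions are pairwise incomparable, so the prunings do not interfere, and each companion remains an ancestor of the assumptions it discharges; consequently the output of the transformation is precisely $\rho$.

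The main obstacle I expect is purely bookkeeping: making the origin map $p$ and the step-$0$ embedding precise (for instance by indexing the nodes of $\rho^*$ by their unfolding histories), and checking that after restoring the discharge relation the side conditions on \RuClo[] (freshness of the token, $a \leq \dx$) and on \RuExp (the inclusions $a_i \sqsubseteq b_i$) are exactly those already witnessed by $\rho$, so that re-adding the annotations yields a legal \Clo derivation identical to $\rho$. Once this correspondence is spelled out, the observation that the unfolding producing $\pi$ from $\rho$ and the transformation producing $\rho$ from $\pi$ are mutually inverse on these data is immediate, which completes the proof.
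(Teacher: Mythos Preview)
Your proposal is correct and follows essentially the same approach as the paper: take the infinite unfolding $\rho^*$, replace \RuClo[] by \RuNu, delete \RuExp nodes and annotations to obtain the \NW proof $\pi$, and observe that the transformation defining ``obtained from'' recovers $\rho$ from $\pi$. The paper states this in two sentences without spelling out the origin map or the side-condition checks you describe, so your version is simply a more detailed execution of the same argument.
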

\begin{proof}
	Let $\rho$ be a \Clo proof and $\rho^*$ its infinite unfolding, i.e. the \Clo derivation defined from $\rho$ by inductively replacing discharged assumptions by the subproof rooted at its companion. Replacing \RuClo[] rules by \RuNu rules in $\rho^*$, removing nodes labelled by \RuExp and removing annotations yields an \NW proof $\pi$ of $\Gamma$, from which $\rho$ can be obtained.
\end{proof}

\begin{lemma}
	There is no \Clo proof of $\Phi$.
\end{lemma}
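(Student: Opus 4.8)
The plan is to argue by contradiction. Suppose $\rho$ is a \Clo proof of $\Phi$. By the preceding lemma $\rho$ is obtained from some \NW proof $\pi$ of $\Phi$, so all of the structural results of this subsection apply to $\rho$. Since $\rho$ is a finite derivation, its unfolding tree $\calT_\rho$ is finite; I will derive a contradiction by constructing an \emph{infinite} path $u_0, u_1, u_2, \dots$ in $\calT_\rho$, where each $u_{i+1}$ is a child of $u_i$ and, crucially, every $u_i$ is \emph{root-like}. I start with $u_0$ the root of $\rho$, which is root-like since it has no proper ancestors.

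The core of the argument is the inductive step: from a root-like unfolding node $u$ I must select a root-like child. By Lemma \ref{lem.obtainedXnode}, $u$ has an $x$-node child $v_x$ and a $y$-node child $v_y$ in $\calT_\rho$. Because $u$ is root-like, no \RuClo[] above $u$ discharges into the subtree of either child, so the only rules that could discharge into these subtrees are the two fixpoint rules applied at $u$ itself. By Lemma \ref{lem.CloDerivNoDischarge}, a \RuClo[] unfolding $\nu y. \psi$ cannot discharge into the $x$-node subtree, and a \RuClo[] unfolding $\nu x. \phi$ cannot discharge into the $y$-node subtree. Hence the only possible discharger threatening $v_x$ is the rule unfolding $\nu x. \phi$ at $u$, and the only one threatening $v_y$ is the rule unfolding $\nu y. \psi$ at $u$. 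It remains to show that these two same-type discharges can never both succeed, so that at least one child is root-like.

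To see this, fix the order in which $u$ unfolds its fixpoints; say $\nu x. \phi$ is unfolded first. I claim $v_y$ is then root-like. If the rule unfolding $\nu y. \psi$ at $u$ is \RuNu there is no token to discharge and we are done. If it is \RuClo[] with token $\dy$, then at the moment it is applied $\nu x. \phi$ has already been unfolded, so the companion sequent contains the \emph{unfolded} side formula $\phi[\nu x. \phi]^{a}$ together with the $x$-name introduced at $u$. A discharged assumption of this \RuClo[] inside the subtree of $v_y$ would have to reproduce that side formula verbatim; but $v_y$ is a $y$-node, so below it every $\nu x. \phi$-component descends from the $\nu y. \psi$ unfolded at $u$, whereas the $x$-name on $\phi[\nu x. \phi]^{a}$ traces through the $\nu x. \phi$-side of $u$ and is therefore never available again. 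This is precisely the obstruction seen at the node C in $\rho_0$. Symmetrically, if $\nu y. \psi$ is unfolded first then $v_x$ is root-like. Either way I obtain a root-like child to take as $u_{i+1}$, and the induction yields the desired infinite path, contradicting the finiteness of $\rho$.

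The step I expect to be the main obstacle is exactly this last one: proving, against \emph{all} admissible annotations and interposed \RuExp applications, that the annotated side formula required for a same-type discharge can never recur in the opposite-type subtree. The clean fact to isolate is that the name introduced by the first fixpoint unfolding, carried on the unfolded side formula, cannot be reconstructed once the derivation passes to a node of the other type; establishing this rigorously means tracking that \RuExp can only shrink annotations and that unfolding $\nu x. \phi$ forces all $y$-names to be dropped, since its side condition requires the annotation to be $\leq x$. The remaining bookkeeping — checking that at least one same-type discharge always fails even in the degenerate cases where a fixpoint is unfolded by \RuNu or where the relevant side formula carries the empty annotation (in which case the \RuNu-unfolded side instead protects the \emph{other} child) — should be routine once this core fact is in place.
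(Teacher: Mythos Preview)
Your overall strategy---build an infinite path of root-like nodes in $\calT_\rho$ and thereby contradict finiteness---is exactly the paper's. The key insight for the hard case (both unfolds are \RuClo[]) is also the paper's: the \emph{fresh} name introduced by the first \RuClo[] sits on the side formula of the second, and since that name cannot reach the opposite-type subtree, the second \RuClo[] cannot discharge there.

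Where your write-up wobbles is the organizing principle. You split cases by the \emph{order} of unfolding and claim: ``if $\nu x.\phi$ is unfolded first, then $v_y$ is root-like.'' That claim is false when $\nu x.\phi$ is unfolded by \RuNu and $\nu y.\psi$ by \RuClo[\dy]: there is then no fresh $x$-name on the side formula $\phi[\nu x.\phi]^{a}$, and nothing prevents the discharged sequent $\phi[\nu x.\phi]^{a},\,\nu y.\psi^{b\dy}$ from reappearing inside the subtree at $v_y$ (take $a=\epsilon$ to see this concretely). What you relegate to ``routine bookkeeping''---switching to the other child when one unfold is \RuNu---is not a degenerate afterthought but the actual case split. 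The paper organizes the induction step by \emph{how many} of the two unfolds are \RuClo[]: if at most one, the child of the opposite type is immediately root-like by Lemma~\ref{lem.CloDerivNoDischarge}; if both, then (and only then) the fresh-name argument kicks in and one picks the child opposite to the \emph{first} \RuClo[]. Reframing your argument along those lines closes the gap cleanly.
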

\begin{proof}
	Let $\pi$ be an \NW derivation of $\Phi$ and $\rho$ be a \Clo derivation obtained from $\pi$. Let $\calT_{\rho}$ be the unfolding tree of $\rho$, we want to show that $\calT_{\rho}$ is infinite. This implies that $\rho$ is infinite as well and thus not a \Clo proof.  We define the \emph{height} of a node $u \in \calT_{\rho}$ to be the number of ancestors of $u$ in $\calT_{\rho}$. 
	We show by induction: 
	\begin{itemize}
		\item[$\circledast$]There exists a root-like unfolding node $u \in \calT_{\rho}$ of arbitrary height.
	\end{itemize} 
Recall that an unfolding node $u$ is called root-like, if no node in the subtree of $\rho$ rooted at $u$ is discharged by a \RuClo[] rule at an ancestor-node of $u$.
The induction base is trivial, as the root is an unfolding node, which does not have any ancestors. 
For the induction step assume that $u$ is a root-like unfolding node. The unfolding node $u$ is labelled by \RuNu or \RuClo[]. Let $u'$ be the lowest descendant of $u$ in $\rho$, which is labelled by \RuNu or \RuClo[].
 
We make a case distinction on whether $u$ and $u'$ are labelled by \RuNu or \RuClo[].
If neither $u$ nor $u'$ are labelled by \RuClo[], then any child of $u$ in $\calT_\rho$ has the same ancestors, which are labelled by \RuClo[], than $u$. Thus any child of $u$ in $\calT_{\rho}$ is root-like. 

Assume that exactly one of $u$ and $u'$, say $u_i$, is labelled by \RuClo[]; without loss of generality assume that $u_i$ is labelled by \RuClo[\dy] with principal formula $\nu y. \psi$. Let $v$ be the child of $u$ in $\calT_{\rho}$ that is an $x$-node, which exists because of Lemma \ref{lem.obtainedXnode}. Then Lemma \ref{lem.CloDerivNoDischarge} states that none of the discharged assumptions of the \RuClo[\dy] rule are in the subtree rooted at $v$. Together with the induction hypothesis this proves that $v$ is root-like. 

If both $u$ and $u'$ are labelled by \RuClo[] we may without loss of generality assume that $u$ is labelled \RuClo with principal formula $\nu x. \phi$. Then $u'$ is labelled by the sequent $\phi[\nu x. \phi]^{a\dx},\nu y. \psi^{b}$ for some annotations $a$ and $b$, and so it must be labelled by \RuClo[\dy] with principal formula $\nu y. \psi$ and premise $\phi[\nu x. \phi]^{a\dx},\psi[\nu y. \psi]^{b\dy}$. Let $v$ be the child of $u$ in $\calT_{\rho}$, that is a $y$-node. In particular there are no traces from $\phi[\nu x. \phi]$ at $u'$ to $\nu x. \phi$ or $\nu y. \psi$ at $v$. Thus the name $\dx$ is not occurring in the subtree rooted at $v$. In particular, no node in the subtree rooted at $v$ is labelled by $\phi[\nu x. \phi]^{a\dx},\nu y. \psi^{b\dy}$ and therefore there is no discharged assumption of \RuClo[\dy] in the subtree rooted at $v$. As $v$ is a $y$-node there is also no discharged assumption of \RuClo in the subtree rooted at $v$ because of Lemma \ref{lem.CloDerivNoDischarge}. Thus $v$ is root-like.

In particular $\rho$ is infinite and hence not a \Clo proof. As every \Clo proof can be obtained from an \NW proof, there is no \Clo proof of $\Phi$.
\end{proof}

\begin{theorem}
	\Clo is not complete.
\end{theorem}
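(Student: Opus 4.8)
The plan is simply to combine the two facts already established in the preceding subsections: that $\bigvee\Phi$ is a valid $\mu$-calculus formula, and that $\Phi$ admits no \Clo proof. By definition, completeness of \Clo would mean that every sequent $\Gamma$ with $\bigvee\Gamma$ valid is provable in \Clo. Hence exhibiting a single valid sequent that has no \Clo proof refutes completeness, and $\Phi = \nu x.\phi,\nu y.\psi$ is precisely such a witness.

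First I would recall the validity half. The explicit \NW derivation of $\Phi$ exhibited in the Validity subsection contains no $\mu$-formulas, so every trace along every infinite branch is a $\nu$-trace; consequently the derivation really is an \NW proof, and by soundness of \NW the formula $\bigvee\Phi$ is valid. Second I would invoke the preceding lemma asserting that there is no \Clo proof of $\Phi$. Its content is that any \Clo derivation obtained from an \NW proof of $\Phi$ has an infinite unfolding tree, hence is itself infinite and therefore not a \Clo proof, while the earlier lemma guarantees that \emph{every} \Clo proof arises from some \NW proof. Together these exclude all \Clo proofs of $\Phi$.

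Putting the two halves together yields the theorem immediately: $\bigvee\Phi$ is valid, yet $\Phi$ is not provable in \Clo, so \Clo fails to prove a valid sequent and is incomplete. I do not expect any obstacle in this final step, which is a one-line corollary of the lemmas. The genuine difficulty lies upstream, in the lemma ruling out \Clo proofs of $\Phi$: there one must carry out the case split on whether the two successive unfolding rules are \RuNu or \RuClo[], and argue, via the $x$-node/$y$-node dichotomy together with the name-occurrence observation of Lemma \ref{lem.CloDerivNoDischarge}, that every root-like unfolding node possesses a root-like child. This is what forces $\calT_\rho$ to be infinite, and it is the crux on which the whole incompleteness result rests.
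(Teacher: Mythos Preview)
Your proposal is correct and matches the paper's own proof, which is the one-line observation that \Clo does not prove the valid sequent $\Phi$. You have simply unpacked this into its two constituents (validity of $\bigvee\Phi$ via the \NW proof, and the preceding lemma excluding any \Clo proof of $\Phi$), and correctly located the real work upstream in that lemma.
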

\begin{proof}
	\Clo does not prove the valid sequent $\Phi$.
\end{proof}

\section{Variations of \Clo}
We mention that \Clo is complete if we add the \RuCut rule or if we restrict formulas to be adisjunctive.
\begin{remark}
	Let \CloCut be the proof system expanding \Clo by the \RuCut rule
	\begin{align*}
		\begin{prooftree}
			\hypo{\Gamma,A^\epsilon}
			\hypo{\Gamma,\mybar{A}^\epsilon}
			\infer2[\RuCut]{\Gamma}
		\end{prooftree}
	\end{align*}
	 In \CloCut one can show the admissibility of the induction rule
	\begin{align*}
		\begin{prooftree}
			\hypo{\Gamma,A[\mybar{\Gamma}]}
			\infer1[\RuInd]{\Gamma, \nu x.A}
		\end{prooftree}
	\end{align*}
	All other rules in Kozen's system are trivially admissible in \CloCut. Hence the system \CloCut is complete due to the completeness of Kozen's system. This proof presupposes the completeness proof by Walukiewicz \cite{Walukiewicz2000}.
	
	If one could show the completeness of \CloCut directly, it would yield an alternative proof for the completeness of Kozen's system, because the translation from \Clo to Kozen's system given in \cite{Afshari2017} also works with the inclusion of \RuCut.
\end{remark}

\begin{remark}
	Given two formulas $\phi,\psi$ we write $\phi \tracestep \psi$ if either $\phi$ is a boolean or modal formula and $\psi$ is a direct subformula of $\phi$ or $\phi$ is a fixpoint formula and $\psi$ is its unfolding.
	The \emph{closure} $\Clos(\Gamma)$ of a sequent $\Gamma$
	is the least superset of $\Gamma$ that is closed under this relation.
	
	We call a sequent $\Gamma$ \emph{adisjunctive} if for every fixpoint formula of the form $\nu x. \phi \in \Clos(\Gamma)$ and  $\psi_0 \lor \psi_1 \in \Clos(\nu x. \phi)$ either $\nu x. \phi \notin \Clos(\psi_0)$ or $\nu x. \phi \notin \Clos(\psi_1)$.
	
	The sequent $\Phi$ is not adisjunctive and this is crucial in showing that $\Phi$ is not provable in \Clo. Observe that $\chi \equiv \Box \nu x. \varphi \lor \Diamond \nu y. \psi \in \Clos(\nu x.\phi)$ and $\nu x.\phi \in \Clos(\Box \nu x. \varphi)$ as well as $\nu x.\phi \in \Clos(\Diamond\nu y. \psi)$.
	
	The notion of adisjunctivity is dual to that of aconjunctivity introduced by Kozen \cite{Kozen1983}. This duality stems from us proving validity in contrast to satisfiability in \cite{Kozen1983}. For the aconjunctive fragment of the modal $\mu$-calculus Kozen proved the completeness of Kozen's system in a rather direct way. 
	Similarly \Clo is complete for the adisjunctive fragment of the modal $\mu$-calculus. This can be seen as \cite[Theorem V.3.]{Afshari2017} goes through for adisjunctive sequents. 
\end{remark}

\bibliographystyle{plain}
\bibliography{Clopleteness}

\end{document}